\documentclass[11pt]{article}

\usepackage{amssymb}
\usepackage{amsthm}
\usepackage{amsmath}
\usepackage{latexsym}
\usepackage{tikz}

\newtheorem{theorem}{Theorem}[section]
\newtheorem{lemma}[theorem]{Lemma}
\newtheorem{cor}[theorem]{Corollary}

\newtheorem{remark}[theorem]{Remark}

\makeatletter \@addtoreset{equation}{section} \makeatother


\newcommand{\R}{\mathbb{R}}
\newcommand{\beq}{\begin{equation}}
\newcommand{\eeq}{\end{equation}}
\newcommand{\eps}{\varepsilon}

\def\bint{\mathop{\mathrel{\nwarrow}\!\!\!\!\!\!\mathrel{\int}}}
\def\fint{\mathop{\mathrel{\nearrow}\!\!\!\!\!\!\mathrel{\int}}}


\begin{document}

\title{Singularity formation for the
compressible Euler equations with general pressure law}

\author{Hualin Zheng\\
	\\
 \small{Mathematical Sciences Center, Tsinghua University,Beijing, 100084, P. R. China;}\\
\small{School of Mathematics, Georgia Institute of Technology, 686 Cherry Street, 
 	Atlanta,}\\
 \small{GA 30332-0160 USA.}\\
 \small{Email:~~zhenghl12@mails.tsinghua.edu.cn;\quad
 	hzheng62@math.gatech.edu}
}

\date{}
\maketitle
\begin{abstract}
In this paper, the singularity formation of classical solutions for the compressible Euler equations with general pressure law is considered.
The gradient blow-up of classical solutions is shown without any smallness assumption
by the delicate analysis on the decoupled Riccati type equations. The proof also relies on a new estimate
for the upper bound of density.
\end{abstract}

{\bf Keywords: }conservation laws, compressible Euler equations, general pressure law, singularity formation, large data.
\section{Introduction}
We consider the one dimensional compressible Euler equations in the Lagrangian coordinates:
\begin{eqnarray}\label{equ-Euler}
\begin{cases}
\tau_t - u_x = 0,\\
u_t+p_x=0,\\
\left(e+\frac{u^2}{2}\right)_t + (up)_x = 0,
\end{cases}
\end{eqnarray}
where $x$ is the space variable, $t$ is the time variable, $u$ is the velocity,
$\rho$ is the density, $\tau=\rho^{-1}$ is the specific volume, 
$p$ is the pressure,  $e$ is the internal energy.
Due to the second law of thermodynamics, $\tau$, $p$ and $e$ are not independent,
the relation within which is determined by the state equation(\textit{c.f.} \cite{Dafermos}).
Normally, another physical quantity entropy $S$ is considered, which formulates the
state equation as $p=p(\tau, S)$.
For $C^1$  solutions, the third equation of \eqref{equ-Euler} is equivalent to the conservation of entropy (\textit{c.f.} \cite{smoller}):
\begin{equation}\label{equ-entropy}
S_t =0.
\end{equation}
Apparently, \eqref{equ-entropy} shows that $S$ is just the function of $x$.
And, the general pressure law we consider in this paper is
\begin{equation}\label{equ-p}
p=p(\tau, S)=p(\tau, S(x)).
\end{equation}
Then the system \eqref{equ-Euler} becomes
\begin{eqnarray}\label{system-general-pressure}
\begin{cases}
\tau_t - u_x = 0,\\
u_t + p(\tau, S(x))_x = 0.\\
\end{cases}
\end{eqnarray}
We consider the calssical solution of initial value problem for \eqref{system-general-pressure} with initial data
$$
\tau(x,t=0)=\tau_0(x), \quad 
u(x,t=0)= u_0(x).
$$ 

Compressible Euler equations
is one of the most important physical models for systems of hyperbolic conservation laws.
It is well known that shock waves are typically formed in finite time
and the analysis on the system is difficult because of the lack of regularity. 
The singularity
formation for both the small initial data problem and the large initial data problem
has long been a very serious issue for the systems of conservation laws.
The well-posedness theory for systems of hyperbolic conservation laws could be found in \cite{Bressan, Dafermos, Glimm,TW}.

When initial data is small, the singularity formation has been well studied for decades.
Lax \cite{lax2} proved that singularity forms in finite time for the general  systems of strictly hyperbolic conservation laws with two unknowns with some initial compression.
For general systems of conservation laws, 
\cite{Fritz John, li-zhou-kong, Li daqian, Liu1} provide fairly complete results for small data. 
Specifically, these results prove that the shock formation happens in finite time in any truly nonlinear characteristic field if the initial data includes compression.

However, the large data singularity formation theory has been finally established in very recent papers
\cite{CPZ, chen-young-zhang} for isentropic Euler equations with $\gamma$-law pressure ($p=K_1\tau^{-\gamma}$) and
the full compressible Euler equations of polytropic ideal gas ($p=K_2e^{S/c_\tau}\tau^{-\gamma}$),
where $K_1$, $K_2$ are positive constants and $\gamma>1$ is the adiabatic gas constant.
The key point in proving the finite time shock formation for large solution
is to have sharp global upper and lower bounds of the density.
More precisely, if we restrict our consideration on singularity formation for full compressible Euler equations, the uniform upper bound of density is needed for any $\gamma>1$, but
the time-dependent lower bound of density is needed only for the most physical case $1<\gamma<3$ (\textit{c.f.} \cite{G3}).
The uniform upper bound on density for $\gamma$-law pressure case has been found by
\cite{chen-young-zhang} which directs to a resolution of the shock formation when 
$\gamma\geq 3$. The singularity formation problem when $1<\gamma<3$ was finally resolved by \cite{CPZ}, in which the authors proved a crucial time-dependent lower bound
estimate on density lower bound. Later on, the time-dependent lower bound of density is improved to its optimal order $O(1/t)$ in \cite{geng chen lower bound}.

Nevertheless, for the full compressible Euler equations with general pressure law, the singularity formation results for non-isentropic case are still not satisfied when the smallness assumption on the initial data is removed. In fact, a complete finite time gradient blow-up result has been showed in \cite{CPZ} when entropy $S$ is a given constant.
Furthermore, \cite{chen-young} provides a singularity formation result for the non-isentropic general pressure law case. Unfortunately, in \cite{chen-young},
there are still several a priori conditions on the pressure function which are not automatically
satisfied for the gas dynamics. The target of this paper is to establish a better singularity formation result on non-isentropic Euler equations without such kind of a priori assumptions. 
The key idea is to establish a uniform upper bound estimate on density, which was lack for general pressure law case previously. In this case, the lower bound of density is redundant. 
Our proof relies on the careful study on the 
decoupled Riccati type ordinary differential equations on gradient variables which was
provided in \cite{chen-young}. 
Using our new estimates, we can get the constant lower bound on coefficients of the Riccati type equations, and the quadratic nonlinearity implies the derivatives must blow-up in finite time.


Through out this paper, we need to propose the following assumptions on the pressure: there exists a positive function $m=m(S)$, positive constants $A$ 
, $k>1$, $k_1, k_2$ and $l_i$ $(i=1,2,\cdots, 8)$ such that, for $\tau\in(0, +\infty)$, 
\begin{eqnarray}
\textbf{(H1)}
&& p_\tau<0, \quad p_{\tau\tau}>0,\quad 
\lim\limits_{\tau\rightarrow 0} p(\tau) = + \infty, \quad
\lim\limits_{\tau\rightarrow + \infty} p(\tau) = 0,\label{relation-p1}\\
\textbf{(H2)}	
&&\int_0^1 \sqrt{-p_\tau} d\tau = + \infty,\quad\quad
\int_1^{+\infty} \sqrt{-p_\tau} d\tau < + \infty,\label{relation-p2}\\
\textbf{(H3)}	
&&	l_2 p c^{\frac{7}{2}} \leq pp_{\tau\tau} \leq l_1c^{\frac{7}{2}},
\label{assumption-p-ptautau-ptau}\\
&&	2(k-1)(-p_\tau)^2\geq k p p_{\tau\tau},\quad
(5+A) (p_{\tau\tau})^2 - 4 p_\tau p_{\tau\tau\tau} \geq 0,\label{p-tau-k-k-1-relation-A-ptau}\\
\textbf{(H4)}	
&&	\frac{m'(x)}{k_2 m} p \leq p_\mu \leq \frac{m'(x)}{k_1 m} p,
\quad
\frac{m'}{l_4 m} p_\tau \leq p_{\tau\mu} \leq \frac{m'}{l_3 m} p_\tau,\label{inequ-pmu} \\
&&	\frac{m'}{l_6 m} p \leq p_{\mu\mu} \leq \frac{m'}{l_5 m} p,
\quad
\frac{m'}{l_8 m} p_{\tau\tau} \leq p_{\tau\tau\mu} \leq \frac{m'}{l_7 m} p_{\tau\tau}. \label{inequ-ptaumu-pmumu-inequ-ptaotaomu}
	\end{eqnarray}
Here, the sound speed is 
\begin{equation}\label{equ-c}
c=\sqrt{-p_\tau(\tau, S)},
\end{equation}
and $p_\mu=\frac{\partial p(\tau,S(x))}{\partial S}S'(x)$.
\begin{remark}
$(\textbf{H1})$ is physically motivated for classical hydrodynamics  (\textit{c.f.} \cite{courant,menikoff-plohr}).
$(\textbf{H2})$ is the sound speed condition. $(\textbf{H3})$ and $(\textbf{H4})$ are
the nonlinearity conditions.
\end{remark}
Now, we introduce the derivatives combination of $(\tau, u)$:
\begin{eqnarray}\label{defi-y-q}
\begin{split}
y:=\sqrt{c} (u+h)_x + \frac{p_\mu}{\sqrt{c}}-I, \quad \quad
q:=\sqrt{c} (u-h)_x - \frac{p_\mu}{\sqrt{c}}+I,
\end{split}
\end{eqnarray}
where
\begin{equation}\label{equ-h}
h=\int_\tau^{+\infty}\sqrt{-p_\xi(\xi, S(x))}d\xi, \quad
I = \int_{h_0}^h \frac{\sqrt{c}}{2}\left(\frac{p_\mu}{c}\right)_h dh,
\end{equation}
and $h_0$ is a constant. 

Under the above assumptions, we can present the main theorem of this paper.
\begin{theorem}\label{main-thm}
Suppose that \textbf{(H1)}-\textbf{(H4)} are satisfied,
assume that 
the initial entropy $S=S(x)$ is $C^1$, finite piecewise monotonic and has bounded total variation, if 
 $\left(\tau_0(x), u_0(x)\right)$ are $C^1$ functions, and there are positive constants $k_{01}$ and $k_{02}$ such that
	$$
	\|\left(\tau_0(x), u_0(x)\right)\|_{C^1} \leq k_{01}, \quad \tau_0(x)\geq k_{02},
	\quad \mbox{for} ~x\in\R.
	$$
Then, the solution of the Cauchy problem of \eqref{system-general-pressure} blows up in finite time, if 
\begin{equation}\label{inequ-y0-q0}
\inf_{x\in \mathbb{R}}\{y(x,0), q(x,0)\}<-N.
\end{equation}
Here $N$ is a positive constant depending
only on $k_{01}$, $k_{02}$ and initial entropy function.
\end{theorem}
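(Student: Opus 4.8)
The plan is to follow the characteristic/Riccati-equation strategy pioneered in \cite{chen-young} and \cite{CPZ}, but to replace their various a priori hypotheses on the pressure by consequences of \textbf{(H1)}--\textbf{(H4)}, the decisive new ingredient being a \emph{uniform} (time-independent) upper bound on the density $\rho$, equivalently a uniform lower bound $\tau \ge \tau_* > 0$ on the specific volume. First I would set up the two families of characteristics $\frac{dx}{dt} = \pm c$ and derive, along them, the decoupled Riccati-type ODEs satisfied by the gradient variables $y$ and $q$ from \eqref{defi-y-q}; these have the schematic form $\frac{d y}{dt} = a_0(\tau,S) + a_1 y - a_2 y^2$ (and symmetrically for $q$ on the other characteristic family), where the coefficients $a_i$ are explicit functions of $p,p_\tau,p_{\tau\tau},p_\mu$ and $h$. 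The quadratic coefficient $a_2$ is, up to harmless factors, something like $\frac{(-p_\tau)_\tau}{(-p_\tau)^{3/2}}$ or a comparable combination; the nonlinearity conditions in \textbf{(H3)} (in particular $2(k-1)(-p_\tau)^2 \ge k p p_{\tau\tau}$ and the condition involving $p_{\tau\tau\tau}$) are exactly what is needed to keep $a_2$ bounded \emph{below} by a positive constant and $a_0, a_1$ controlled, once $\tau$ ranges over a compact subinterval $[\tau_*, \tau^*]$.

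So the core of the argument reduces to two bounds: (i) a uniform \emph{upper} bound $\tau \le \tau^*$, which follows from $h \ge 0$ together with the invariant-region / energy-type control on $u \pm h$ (the variables $u \pm h$ are the Riemann-invariant-like quantities, and \textbf{(H2)} guarantees that $h$ is a bona fide change of variables with $h \to \infty$ as $\tau \to 0$ and $h$ finite as $\tau\to\infty$, so bounding $u\pm h$ bounds $h$ hence bounds $\tau$ away from $0$, i.e. $\rho$ from above); and (ii) the new uniform \emph{lower} bound $\tau \ge \tau_*$, which is the genuinely new estimate the abstract claims and which I expect to require a separate monotonicity/maximum-principle argument along characteristics using \textbf{(H1)} ($p_{\tau\tau}>0$, $p \to +\infty$ as $\tau \to 0$) and the bounded-total-variation, finite-piecewise-monotonic structure of $S(x)$ to control the forcing $p_\mu$ and the integral terms $I$ and $h$. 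The hypotheses in \textbf{(H4)} comparing $p_\mu, p_{\tau\mu}, p_{\mu\mu}, p_{\tau\tau\mu}$ to $\frac{m'}{m}$ times $p$, $p_\tau$, etc., are precisely designed so that the entropy-induced source terms in the Riccati ODEs and in the evolution of $\tau$ are dominated by $\int |m'/m|$, which is finite by the bounded-variation assumption; this is what makes the threshold constant $N$ depend only on $k_{01}$, $k_{02}$ and the entropy profile.

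With both $\tau_* \le \tau \le \tau^*$ in hand, one localizes to the minimizing characteristic: pick $x_0$ with $y(x_0,0) < -N$ (or $q(x_0,0) < -N$), follow the corresponding characteristic, and on it the Riccati inequality becomes $\frac{dy}{dt} \le C_1 + C_2|y| - c_* y^2$ with $c_* > 0$; choosing $N$ large enough that $-C_1 - C_2 N + c_* N^2 > 0$ forces $\frac{dy}{dt} \le -\delta < 0$ initially and then a standard ODE comparison (the solution of $\dot z = -c_* z^2/2$ with negative initial data blows up to $-\infty$ in finite time) shows $y \to -\infty$ in finite time. Since $y$ (resp. $q$) controls $u_x$ and $\tau_x$ up to the bounded quantities $\sqrt{c}$, $p_\mu/\sqrt{c}$, $I$, this yields gradient blow-up of the classical solution, which is the assertion of Theorem \ref{main-thm}.

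The main obstacle, and the heart of the paper, is establishing the uniform lower bound $\tau \ge \tau_*$ for general pressure law: unlike the $\gamma$-law case one cannot use the explicit algebra of power functions, so one must extract from the structural inequalities \textbf{(H1)}, \textbf{(H3)}, \textbf{(H4)} a closed differential inequality for $\min_x \tau$ (or for a suitable weighted quantity along characteristics) that prevents $\tau$ from reaching $0$; keeping track of how the entropy variation enters without destroying this bound — and verifying that the resulting constant genuinely depends only on $k_{01}, k_{02}$ and $S$ — is the delicate step. A secondary technical point is checking that $h_0$, $I$, and the change of variables $\tau \leftrightarrow h$ behave well across the finitely many monotonicity intervals of $S$, but \textbf{(H2)} and the piecewise-monotone BV assumption should handle this cleanly.
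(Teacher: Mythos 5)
Your overall framing — Riccati equations for $y,q$ along characteristics, $L^\infty$ control of the Riemann invariants $s=u+h$, $r=u-h$, and the blow-up driven by the quadratic nonlinearity — matches the paper's strategy. But there is a genuine gap in the final ODE step. You claim that once $\tau$ is confined to a compact interval $[\tau_*,\tau^*]$, the quadratic coefficient $a_2=\frac{c_h}{2\sqrt c}=\frac14(-p_\tau)^{-5/4}p_{\tau\tau}$ is bounded below by a constant $c_*>0$, and you then finish by comparison with $\dot z=-\tfrac{c_*}{2}z^2$. This is not available. The paper only proves $\tau\geq\tau_{\min}>0$ (Corollary 3.3, from boundedness of $h$ together with \textbf{(H2)}), it never proves $\tau\leq\tau^*$, and it states explicitly in the introduction that the lower bound on density is \emph{redundant}. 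Establishing an upper bound on $\tau$ uniformly in time is exactly the hard time-dependent density lower bound problem that occupied separate papers in the $\gamma$-law case; it is not a by-product of the present hypotheses. The paper even remarks before Lemma 4.4 that $a_2$ may vanish as $t\to\infty$, already for ordinary $\gamma$-law gas dynamics, so your constant lower bound simply fails. (Separately, your items (i) and (ii) swap labels and effectively describe the same bound $\tau\geq\tau_*$ twice — once correctly via the Riemann invariants, once via a hypothetical maximum principle that is never needed.)

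What the paper actually does is weaker but sufficient: it proves
\[
\int_0^\infty a_2\big(\tau(x,t),t\big)\,dt=\infty,
\]
along each characteristic, by showing $a_2^{-1}\leq k_{14}t+k_{15}$. This is where the otherwise mysterious-looking third-derivative hypothesis $(5+A)(p_{\tau\tau})^2-4p_\tau p_{\tau\tau\tau}\geq 0$ in \textbf{(H3)} is used: it is precisely the condition making $\big[4(-p_\tau)^{5/4}(p_{\tau\tau})^{-1}\big]_\tau\leq A(-p_\tau)^{1/4}$, and the right-hand side integrates to $\int_{\tau_{\min}}^{\tau(x,t)}(-p_\xi)^{1/4}d\xi$, whose time derivative along a characteristic equals $\tfrac12(y+q)$ and hence is bounded above by $\tfrac12(Y+Q)$ once the roots of $a_0+a_1y-a_2y^2=0$ are controlled (Lemma 4.2). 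That gives linear growth of $a_2^{-1}$, divergence of $\int a_2\,dt$, and then the Riccati comparison $\partial_+ y\leq-\eps a_2 y^2$ forces $y\to-\infty$ in finite time without ever needing $\tau$ bounded above. You should replace your constant lower bound on $a_2$ with this time-dependent mechanism and drop the claimed second density bound entirely; the only density bound you need is $\tau\geq\tau_{\min}$, which comes, exactly as you describe in your point (i), from the $L^\infty$ bound on $s,r$ (obtained via a Gronwall iteration over the finitely many monotonicity intervals of $S$, using $\int|m'/m|<\infty$) together with \textbf{(H2)}.
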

This paper is organized as follows: in section 2, we introduce some notations and prove the properties
of the pressure $p$. In section 3, we obtain the $L^\infty$ boundedness of the Riemann 
invariants, and this gives the upper bound of density and wave speed. In section 4, 
we prove the finite time singularity formation by analysising the Riccati type equations.

\section{Notations and Preparations}
We denote the forward and backward characteristics by
$$
\frac{dx}{dt}=c \quad \mbox{and} \quad   \frac{dx}{dt}=-c,
$$
and the corresponding directional derivatives along the characteristics are
\begin{eqnarray*}
	\partial_+:=\partial_t+c\partial_x  \quad \mbox{and} \quad
	\partial_-:=\partial_t-c\partial_x.
\end{eqnarray*}
Then we can denote the Riemann invariants by
\begin{equation}\label{rs}
s:=u+h  \quad \mbox{and} \quad
r:=u-h.
\end{equation}
We can easily get the following system of $u$ and $h$ (\textit{c.f.}\cite{chen-young}):
\begin{equation}\label{equ-hup}
\begin{cases}
h_t + cu_x =0,\\
u_t + ch_x +p_\mu = 0.
\end{cases}
\end{equation}
Thus, direct calculation shows that
\begin{eqnarray}\label{partial-sr}
\begin{split}
&\partial_+s
=(u_t+ch_x)+(h_t+cu_x)
=-\frac{\partial p(\tau,S(x))}{\partial S}S'(x),\\
&\partial_-r
=(u_t+ch_x)-(h_t+cu_x)
=-\frac{\partial p(\tau,S(x))}{\partial S}S'(x).
\end{split}
\end{eqnarray}
Furthermore, \eqref{inequ-pmu} yields the following inequality:
\begin{equation}\label{s+r-}
-\frac{p}{k_1}\frac{m'(x)}{m(x)}\leq \partial_+s=\partial_-r \leq -\frac{p}{k_2}\frac{m'(x)}{m(x)}.
\end{equation}

Next, we will prove the property of the pressure $p$ which will play a vital role in the proof
of Theorem \ref{main-thm}.
\begin{lemma}
Under the assumptions of (\textbf{H1}) and (\textbf{H3}),
\begin{equation}\label{inequ-pcsr}
\frac{1}{2k} c(s-r)\leq p\leq \frac{1}{2} c(s-r),
\end{equation}
where $k>1$.
\end{lemma}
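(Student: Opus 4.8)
The quantity $c(s-r) = 2ch = 2\sqrt{-p_\tau}\,h = 2\sqrt{-p_\tau}\int_\tau^{+\infty}\sqrt{-p_\xi}\,d\xi$, so the claimed two-sided inequality \eqref{inequ-pcsr} is really a statement comparing $p(\tau,S)$ with $\sqrt{-p_\tau}\int_\tau^{+\infty}\sqrt{-p_\xi}\,d\xi$, uniformly in $\tau\in(0,+\infty)$ and in $S$. The plan is to fix $S$ and study, for each fixed value of the entropy, the one–variable function $\tau\mapsto p(\tau)$ (writing $p(\tau)$ for $p(\tau,S)$), exploiting \textbf{(H1)} (so $p>0$, $p_\tau<0$, $p_{\tau\tau}>0$, $p\to+\infty$ as $\tau\to 0$, $p\to 0$ as $\tau\to\infty$) and the key structural bound from \textbf{(H3)}, namely $2(k-1)(-p_\tau)^2 \ge k\,p\,p_{\tau\tau}$, i.e. $-\frac{p\,p_{\tau\tau}}{p_\tau^2}\le \frac{2(k-1)}{k}$. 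The first reduction I would make is to rewrite $h$ in a form amenable to integration by parts: since $\sqrt{-p_\xi}=c(\xi)$, and recalling $c=\sqrt{-p_\tau}$, I want to bound $h=\int_\tau^\infty c\,d\xi$ above and below by something proportional to $p/c$.

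For the \emph{upper} bound $p\le \tfrac12 c(s-r)=ch$, equivalently $h\ge p/c$, I would argue as follows. Consider $g(\tau):=ch - p = \sqrt{-p_\tau}\int_\tau^\infty\sqrt{-p_\xi}\,d\xi - p$. As $\tau\to+\infty$, $p\to 0$ and (using \textbf{(H2)}, the convergence of $\int_1^\infty\sqrt{-p_\tau}$) the product $c\,h\to 0$ as well, so $g(\tau)\to 0$; hence it suffices to show $g$ is nonincreasing, i.e. $g'(\tau)\le 0$. Differentiating, $g'=\frac{-p_{\tau\tau}}{2\sqrt{-p_\tau}}\,h + \sqrt{-p_\tau}\cdot(-\sqrt{-p_\tau}) - p_\tau = \frac{-p_{\tau\tau}}{2c}\,h + 0 = -\frac{p_{\tau\tau}}{2c}h$, wait — let me recompute: $\frac{d}{d\tau}(ch) = c'h + c\cdot(-c) = c'h - c^2 = c'h + p_\tau$, and $\frac{d}{d\tau}(-p) $ contributes; actually $g' = c'h + c(-c) - p_\tau = c'h - c^2 - p_\tau = c'h$ since $c^2=-p_\tau$. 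With $c'=\frac{-p_{\tau\tau}}{2\sqrt{-p_\tau}}<0$ and $h>0$ we get $g'<0$, so $g$ decreases to its limit $0$ at $+\infty$, giving $g\ge 0$, i.e. $h\ge p/c$, which is the upper bound. (So in fact the upper bound needs only \textbf{(H1)}–\textbf{(H2)} and no part of \textbf{(H3)}.)

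For the \emph{lower} bound $p\ge \tfrac{1}{2k}c(s-r)=\tfrac{1}{k}ch$, equivalently $h\le k\,p/c$, I would set $G(\tau):=k\,p/c - h$ and show $G\ge 0$; again $G\to 0$ as $\tau\to+\infty$ (since $p/c \to 0$: indeed $p/c = p/\sqrt{-p_\tau}$, and the same asymptotic analysis using \textbf{(H2)} and the fact that $h\to0$ controls this), so it suffices to show $G$ is nonincreasing, $G'\le 0$. Compute $\frac{d}{d\tau}(p/c) = \frac{p_\tau c - p c'}{c^2} = \frac{p_\tau}{c} - \frac{p c'}{c^2} = \frac{p_\tau}{c} + \frac{p\,p_{\tau\tau}}{2c^3}$ (using $c' = -p_{\tau\tau}/(2c)$). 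Then $G' = k\left(\frac{p_\tau}{c} + \frac{p p_{\tau\tau}}{2c^3}\right) - (-c) = k\frac{p_\tau}{c} + \frac{k\, p p_{\tau\tau}}{2c^3} + c$. Multiplying by $c>0$ and using $c^2 = -p_\tau$: $c\,G' = k p_\tau + \frac{k p p_{\tau\tau}}{2c^2} + c^2 = k p_\tau + \frac{k p p_{\tau\tau}}{2(-p_\tau)} + (-p_\tau) = (k-1)p_\tau - \frac{k p p_{\tau\tau}}{2(-p_\tau)}$. Since $p_\tau<0$, this is $\le 0$ precisely when $(k-1)(-p_\tau) \ge \frac{k p p_{\tau\tau}}{2(-p_\tau)}$, i.e. $2(k-1)(-p_\tau)^2 \ge k p p_{\tau\tau}$ — which is exactly the hypothesis \eqref{p-tau-k-k-1-relation-A-ptau} from \textbf{(H3)}. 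Hence $G'\le 0$, so $G\ge 0$ and $h\le kp/c$, completing the lower bound.

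The only genuine subtlety — and the step I would be most careful about — is justifying the boundary behavior at $\tau\to+\infty$: that both $ch\to 0$ and $kp/c - h\to 0$, which is what lets me conclude the sign of the monotone functions from their limits rather than needing an explicit value somewhere. This uses \textbf{(H2)} (the finiteness of $\int_1^\infty\sqrt{-p_\tau}\,d\tau$, so $h(\tau)\to0$) together with \textbf{(H1)} ($p\to0$); one should check $c$ does not degenerate in a way that makes $p/c$ blow up, but since we have already shown $p\le ch$ (upper bound), we get $p/c\le h\to 0$ for free, and then $G\to -\lim h = 0$ as well. Alternatively, if worrying about $\tau\to 0$ instead, one notes $G$ and $g$ need only be controlled at one end; the $\tau\to+\infty$ end is the clean one. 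I would present the argument in the order: (i) derive $\frac{d}{d\tau}(ch)=c'h\le0$ and the limit at infinity $\Rightarrow$ upper bound; (ii) use the upper bound to get $p/c\to0$; (iii) derive $c\frac{d}{d\tau}(kp/c-h) = (k-1)p_\tau - \frac{kpp_{\tau\tau}}{2(-p_\tau)}\le0$ via \textbf{(H3)} and the limit at infinity $\Rightarrow$ lower bound.
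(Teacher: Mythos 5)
Your proposal is correct, and the essential mechanism matches the paper's. For the lower bound $ch\le kp$ both you and the paper reduce to the inequality $k\left(\frac{p}{c}\right)_\tau \le -\sqrt{-p_\tau}$, observe that this is exactly equivalent to the hypothesis $2(k-1)(-p_\tau)^2\ge k\,p\,p_{\tau\tau}$ in \textbf{(H3)}, and then integrate from $\tau$ to $+\infty$ using $p/c\to 0$; your phrasing as ``$G:=kp/c-h$ is nonincreasing with limit $0$'' is the same computation in disguise. For the upper bound $p\le ch$ the two routes differ slightly in presentation: the paper uses that $-p_\tau$ is decreasing (from $p_{\tau\tau}>0$) to bound $ch=\int_\tau^{\infty}\sqrt{-p_\tau}\sqrt{-p_\xi}\,d\xi\ge\int_\tau^\infty(-p_\xi)\,d\xi=p$ by a pointwise comparison of integrands, whereas you differentiate $g=ch-p$, get $g'=c'h<0$, and appeal to the limit $g\to 0$ at $+\infty$; both are elementary and correct, and your version has the virtue of being uniform with the lower-bound argument. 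One small slip to fix: in the displayed computation you wrote $cG'=(k-1)p_\tau-\frac{k\,p\,p_{\tau\tau}}{2(-p_\tau)}$, but the sign before the fraction should be $+$ (as your next sentence in fact assumes); with the written minus sign the quantity would be trivially negative and no appeal to \textbf{(H3)} would be needed, which is not what you intend.
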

\begin{proof}
If we can prove that
$$
p\leq ch\leq kp,
$$
then, due to $s-r=2h$, we deduce \eqref{inequ-pcsr}.

For the first part, we know that $-p_\tau$ is monotone decreasing in view of $(\ref{relation-p1})_2$, so we have
\begin{eqnarray*}
	ch
	=\int_\tau^{+\infty}\sqrt{-p_\tau}\sqrt{-p_\xi}d\xi
	\geq \int_\tau^{+\infty} \sqrt{-p_\xi}\sqrt{-p_\xi}d\xi=p,
\end{eqnarray*}
where we have used \eqref{equ-c}, $(\ref{equ-h})_1$ and $(\ref{relation-p1})_4$, thus we get $p\leq ch$.

For the second part, if there is a constant $k>1$ such that
\begin{equation}\label{inequ-k-p-c-ptau}
k\left(\frac{p}{c}\right)_\tau \leq -\sqrt{-p_\tau},
\end{equation}
integrating both sides from $\tau$ to $+\infty$ with respect to the time variable yields 
$ch\leq kp$. Actually, direct calculation shows that
\begin{eqnarray*}
	\left(\frac{p}{c}\right)_\tau=\left(\frac{p}{\sqrt{-p_\tau}}\right)_\tau
	=-\sqrt{-p_\tau}+\frac{1}{2}pp_{\tau\tau}(-p_\tau)^{-\frac{3}{2}},
\end{eqnarray*}
this yields that \eqref{inequ-k-p-c-ptau} is equivalent to $(\ref{p-tau-k-k-1-relation-A-ptau})_1$.
Thus, we prove the result of this lemma.
\end{proof}
\begin{remark}
It is worth noticing that $p\leq ch$ is the direct conclusion of the property of $p$. So, 
the following property is natural
\begin{equation}\label{p-leq-ch}
\frac{p}{c}\leq h.
\end{equation}
Here $p$, $c$ and $h$ are defined by \eqref{equ-p}, \eqref{equ-c} and \eqref{equ-h}
respectively.
\end{remark}
\section{The $L^\infty$ boundedness of $s$ and $r$}
In this section, we will first prove the $L^\infty$ boundedness of the Riemann invariants $s$
and $r$. Based on this, we can get the boundedness of $|u|$ and $|h|$. Finally, the upper bound of density $\rho$ will be obtained,  which is crucial to gain the singularity formation. Notice that
\begin{equation}\label{m+-}
\partial_+m=cm'(x) \quad \mbox{and} \quad \partial_-m=-cm'(x).
\end{equation}
We will discuss from two aspects according to the sign of the derivative of $m(x)$:

(\textbf{i}) When $m'(x)\geq0$, using \eqref{s+r-} and \eqref{inequ-pcsr}, we have
$$
-\frac{c}{2k_1} \frac{m'(x)}{m(x)}(s-r)\leq
\partial_+s=\partial_-r
\leq -\frac{c}{2kk_2}\frac{m'(x)}{m(x)}(s-r),
$$
which means,
\begin{eqnarray*}\begin{split}
-\frac{1}{2k_1} \frac{\partial_+m}{m}(s-r)\leq&\partial_+s\leq-\frac{1}{2kk_2}\frac{\partial_+m}{m}(s-r),\\
\frac{1}{2k_1} \frac{\partial_-m}{m}(s-r)\leq&\partial_-r\leq\frac{1}{2kk_2}\frac{\partial_-m}{m}(s-r).
\end{split}\end{eqnarray*}
Introducing new variables
\begin{eqnarray}\label{defi-s1r1}
s_{11} = m^{\frac{1}{2k_1}}s,
\quad
r_{11} = m^{\frac{1}{2k_1}}r,
\quad
s_{12}=m^{\frac{1}{2kk_2}}s,
\quad
r_{12}=m^{\frac{1}{2kk_2}}r,
\end{eqnarray}
then
\begin{equation}\label{inequ-s1}
\partial_+s_{11}\geq\frac{1}{2k_1}\frac{\partial_+m}{m}r_{11},
\quad
\partial_+s_{12}\leq\frac{1}{2kk_2}\frac{\partial_+m}{m}r_{12},
\end{equation}
\begin{equation}\label{inequ-r1}
\partial_-r_{11}\geq\frac{1}{2k_1}\frac{\partial_-m}{m}s_{11},
\quad
\partial_-r_{12}\leq\frac{1}{2kk_2}\frac{\partial_-m}{m}s_{12}.
\end{equation}

(\textbf{ii}) When $m'(x)\leq 0$, we similarly have
\begin{equation}\label{inequ-s2}
\partial_+s_{21}\geq\frac{1}{2kk_1}\frac{\partial_+m}{m}r_{21},\quad
\partial_+s_{22}\leq\frac{1}{2k_2}\frac{\partial_+m}{m}r_{22},
\end{equation}
\begin{equation}\label{inequ-r2}
\partial_-r_{21}\geq\frac{1}{2kk_1}\frac{\partial_-m}{m}s_{21},\quad
\partial_-r_{22}\leq\frac{1}{2k_2}\frac{\partial_-m}{m}s_{22},
\end{equation}
where
\begin{eqnarray}\label{defi-s2r2}
s_{21} = m^{\frac{1}{2kk_1}}s,\quad r_{21} = m^{\frac{1}{2kk_1}}r,\quad
s_{22} = m^{\frac{1}{2k_2}}s,\quad r_{22} = m^{\frac{1}{2k_2}}r.
\end{eqnarray}
		
According to the assumptions on the initial entropy in Theorem \ref{main-thm}, we have
\begin{equation}\label{v}
V:= \frac{1}{2c_\tau}\int_{-\infty}^{+\infty} |S'(x)|dx
= \int_{-\infty}^{+\infty}\left|\frac{m'}{m}(x)\right|dx
<+\infty.
\end{equation}
Due to the assumptions on the initial data in Theorem \ref{main-thm}, from $m=m(S)$,  there exist positive constants $k_{ml}$ and $k_{mr}$, such that
\begin{equation}\label{m}
0<k_{ml}<m(x)<k_{mr}.
\end{equation}
Also, we denote positive constants $k_{s}$ and $k_{r}$, then
\begin{equation}\label{sr0}
|s(\cdot,0)|<k_{s}, \quad |r(\cdot,0)|<k_{r}.
\end{equation}
\begin{lemma}\label{lem-sr-bound}
Under the assumptions of Theorem \ref{main-thm}, given a point $(x_1, t_1)$, suppose the solution of \eqref{system-general-pressure}
is $C^1$ in the characteristic triangle bounded by the forward and backward characteristics through
$(x_1, t_1)$ and the line $t=0$. Then, we can prove that
$$
|s(x_1,t_1)|\leq n_s, \quad |r(x_1,t_1)|\leq n_r, \quad i=1,2.
$$
Here $n_s$ and $n_r$ depend on the initial data and the number of 
piecewise monotonic regions.
\end{lemma}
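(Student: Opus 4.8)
The plan is to control $s$ from above and $r$ from below throughout the characteristic triangle $\Omega$ determined by $(x_1,t_1)$; since $s-r=2h>0$ by \textbf{(H1)}--\textbf{(H2)}, these two one-sided bounds automatically bracket both $s$ and $r$, so it is enough to produce them (and then $|u|=\tfrac12|s+r|$, $|h|=\tfrac12|s-r|$ are controlled as well). Because the solution is assumed $C^1$ on the compact set $\Omega$, the sound speed $c=\sqrt{-p_\tau}$ is bounded and bounded away from zero there, so the forward and backward characteristics sweep $\Omega$ with finite speed; consequently each of them crosses only finitely many of the monotonicity intervals of $m=m(S)$, and the relevant total variation is bounded by $V$ in \eqref{v}. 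As a first step I would refine the (finitely many) monotonicity intervals of $m$ into finitely many subintervals $I_1,\dots,I_M$ on each of which $m$ is still monotone and on which the variation $L_j$ of $\log m$ is below a fixed threshold (to be fixed at the end, depending only on $k,k_1,k_2$ and $k_{ml},k_{mr}$); $M$ is finite, depending only on the initial data through $V$ and on the number of monotonicity regions.

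Writing $\Omega_j=\Omega\cap\{x\in I_j\}$, I would run the estimate strip by strip. On a strip with $m'\ge 0$, \eqref{s+r-} gives $\partial_+s=\partial_-r=-p_\mu\le 0$, so $\sup s$ propagates along forward characteristics from the left edge of $\Omega_j$ (or from $t=0$) \emph{with no loss}; the mirror statement handles $\inf r$ on strips with $m'\le 0$. The ``wrong-direction'' bound on each strip --- $\inf r$ when $m'\ge 0$, $\sup s$ when $m'\le 0$ --- is where the weighted variables \eqref{defi-s1r1}, \eqref{defi-s2r2} and the one-sided inequalities \eqref{inequ-s1}--\eqref{inequ-r2} are used: combining them with the elementary fact $r<s$ (equivalently $r_{11}<s_{11}$, etc.) and with \eqref{inequ-pcsr}/\eqref{p-leq-ch} to replace $p$ by $\tfrac12c(s-r)$, a Gronwall argument on $\Omega_j$ yields that bound in closed form, up to a multiplicative factor $e^{O(L_j)}$ and an additive source of size $O(L_j)$ times the already-controlled quantity of the \emph{same} strip (the weighted variables are precisely the device that makes the $m$-powers cancel in the loss-free direction and produces these clean one-strip inequalities).

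Assembling the strip estimates gives a finite linear recursion in which $\sup s$ propagates from left to right and $-\inf r$ from right to left, each feeding the other's source term. The main obstacle --- and the reason for the refinement --- is to close this coupled recursion uniformly, since the naive global version only controls things when $V$ is small. The point is that in the induced dependency graph every cycle switches ``direction'' exactly twice, so the feedback around it carries the product of exactly two of the small quantities $L_j$ multiplied by an otherwise bounded ($V$-dependent) factor $e^{O(V)}$; once the threshold on $L_j$ in the refinement is chosen small enough (finitely many strips, number depending on $V$ and the number of monotonicity regions), the associated fixed-point map becomes a contraction and the recursion can be solved, producing bounds on $\sup_\Omega s$ and $\inf_\Omega r$ depending only on $k_{01},k_{02}$, the constants of \textbf{(H1)}--\textbf{(H4)} and the number of monotonicity regions. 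This gives $n_s,n_r$ (and hence the bounds on $|u|,|h|$ used in the remainder of Section~3). The delicate points are the weight cancellations that make the ``free'' direction genuinely loss-free and the bookkeeping guaranteeing $M<\infty$; the estimates themselves are routine ODE comparisons along characteristics.
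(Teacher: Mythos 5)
Your plan correctly isolates the central ingredients the paper uses — the sign dichotomy from \eqref{s+r-}, the weighted variables \eqref{defi-s1r1}/\eqref{defi-s2r2}, and the replacement of $p$ by $\tfrac12 c(s-r)$ via \eqref{inequ-pcsr} so that $cm'/m$ appears as $\partial_\pm m/m$ — but the assembly step diverges from the paper, and the divergence is driven by a misconception. You assert that ``the naive global version only controls things when $V$ is small'' and then introduce a refinement of the monotonicity intervals into strips where $\log m$ has small variation, closing the resulting finite recursion by a contraction/fixed-point argument whose viability rests on the unverified claim that every cycle in the dependency graph switches direction exactly twice. In fact the naive global version \emph{is} what the paper runs: after integrating the weighted one-sided inequalities along $\ora{\mc L}_{x_1}$ and $\ola{\mc L}_{x_\sigma}$ across the monotonicity regions, it gets a coupled integral inequality, substitutes the $r$-bound \eqref{r1-leftarrow} into the $s$-bound \eqref{s1-rihghtarrow-new} to obtain \eqref{s1r1}, introduces the running integral $F$, exploits the monotonicity $F(x_\zeta)\le F(x_\sigma)$, and closes with a single Gronwall step yielding $F(x_1)\le (k_{12}k_sV+k_{10}k_{13}k_rV^2)e^{k_{11}k_{13}V^2}$. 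No smallness of $V$ is needed: the exponential is large when $V$ is large, but it is finite, which is all the lemma requires.

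Because of this, your proposal is not so much a different proof as a more elaborate scaffold built to avoid a difficulty that is not there, and the part of the scaffold that would actually carry the load --- the cycle-counting claim and the verification that the fixed-point map is a contraction once the threshold on $L_j$ is chosen --- is left as an assertion. Two more minor points of contrast with the paper: (i) the paper does not use the ``loss-free'' one-sided propagation of $\sup s$ (resp.\ $\inf r$) that you highlight; it uses \emph{both} sides of the weighted inequalities and bounds $|s|,|r|$ symmetrically, which lets the accounting across sign changes of $m'$ be done once with the fixed constants $k_4,\dots,k_{13}$ rather than strip by strip. (ii) The dependence of $n_s,n_r$ on ``the number of piecewise monotonic regions'' in the statement comes precisely from iterating the two-region computation; your refinement would introduce an additional dependence on the artificial strip count $M$ and on the chosen threshold, which is extraneous. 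If you want to salvage your route, you must actually prove the contraction estimate rather than gesture at cycle structure; but the simpler fix is to drop the refinement entirely and run the iterated-integral-plus-Gronwall argument directly, as the paper does.
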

\begin{proof}
Denote the forward and backward characteristics through a point $(x_*, t_*)$ by
\begin{eqnarray*}\begin{split}
&\overrightarrow{\mathcal{L}}_{x_*}:=\{(x, \overrightarrow{t}(x))|x\leq x_*\}
=\{(\overrightarrow{x}(t),t)|t\leq t_*\},\\
&\overleftarrow{\mathcal{L}}_{x_*}:=\{(x, \overleftarrow{t}(x))|x\geq x_*\}
=\{(\overleftarrow{x}(t),t)|t\leq t_*\}.
\end{split}\end{eqnarray*}

First, we will prove this lemma using the following case of three piecewise monotonic regions:  
suppose there is a point $(x_2, t_2)$ in the forward characteristic line and $(x_3,t_3)$ in the backward characteristic line.
Assume  $m'\leq 0$ in the domain where the region from $x=x_2$ to $x=x_3$ intersects with the characteristic triangle, and $m'\geq 0$ in the rest of the characteristic triangle.
						
\begin{figure}[htbp]
	\centering
	\includegraphics[width=8cm]{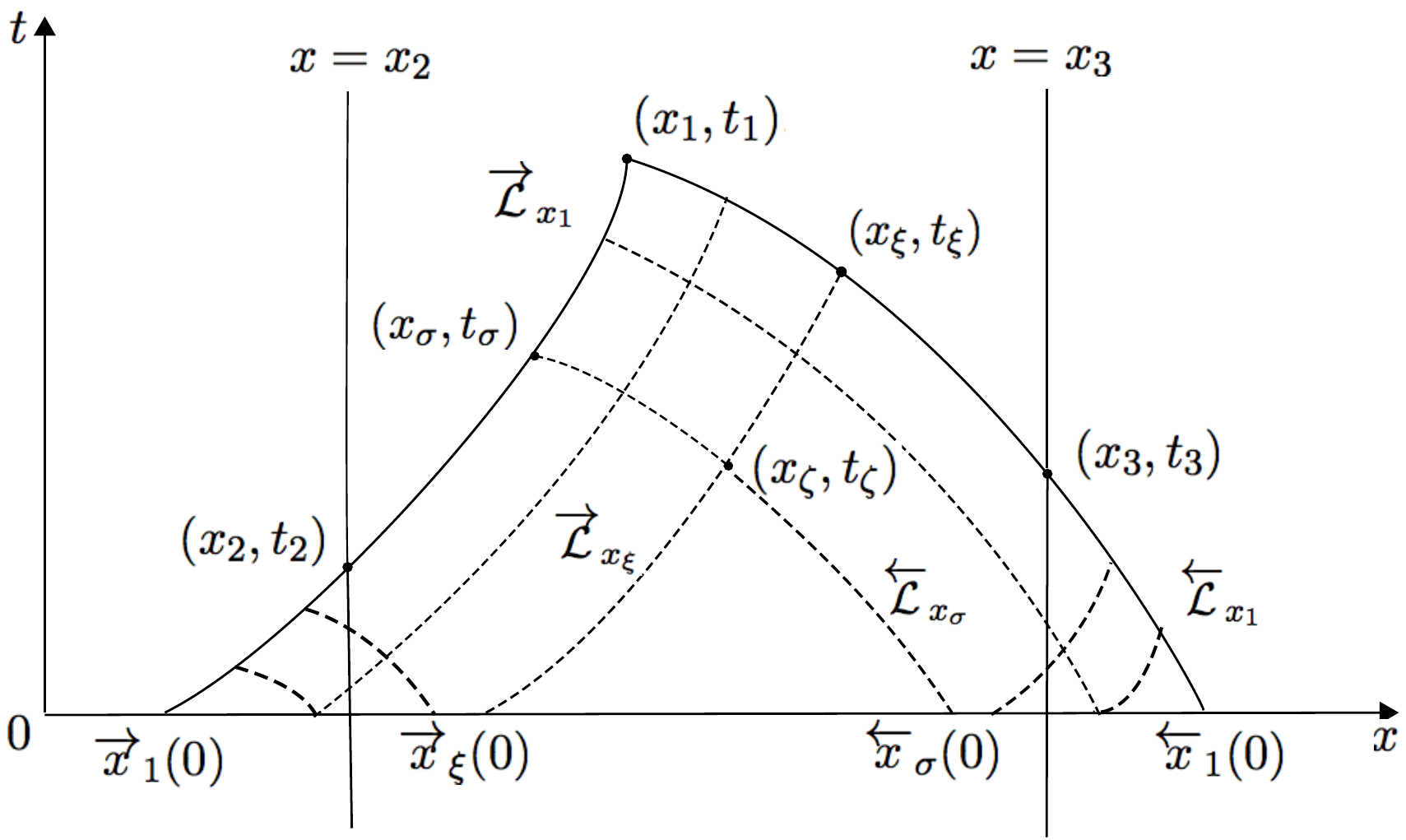}
	\caption{Characteristic triangle} \label{fig-1}
\end{figure}
						
In the forward characteristic line $\overrightarrow{\mathcal{L}}_{x_1}$,
due to $m'(x)\geq 0$ from $(\overrightarrow{x}_1(0),0)$ to $(x_2,t_2)$, so 
integrating \eqref{inequ-s1} along this part, we can get
\begin{eqnarray*}\begin{split}
&s_{11}(x_2,t_2)\geq s_{11}(\overrightarrow{x}_1(0),0)
+\frac{1}{2k_1}\fint_{\overrightarrow{x}_1(0)}^{x_2}\frac{m'}{m}(x)
r_{11}\left(x,\overrightarrow{t}(x)\right)dx,\\
&s_{12}(x_2,t_2)\leq s_{12}(\overrightarrow{x}_1(0),0)
+\frac{1}{2kk_2}\fint_{\overrightarrow{x}_1(0)}^{x_2}\frac{m'}{m}(x)
r_{12}\left(x,\overrightarrow{t}(x)\right)dx.
\end{split}\end{eqnarray*}
Due to the monotone increasing property of  $m(x)$, we have 
$$\left|\frac{m(\overrightarrow{x}_1(0))}{m(x_2)}\right|\leq 1, \quad \quad
\left|\frac{m(x)}{m(x_2)}\right| \leq 1\quad
\mbox{for} ~ x\in\left(\overrightarrow{x}_1(0), x_2\right).
$$
Then, we have
\begin{equation}\label{s1-rightarrow}
|s(x_2,t_2)|\leq |s(\overrightarrow{x}_1(0),0)|
+k_3\fint_{\overrightarrow{x}_1(0)}^{x_2}\left|\frac{m'}{m}(x)\right|
\left|r\left(x,\overrightarrow{t}(x)\right)\right|dx,
\end{equation}
where $k_3=\max\left\{(2k_1)^{-1}, (2kk_2)^{-1}\right\}$.

Since $m'(x)\leq 0$, integrating \eqref{inequ-s2}
from $(x_2,t_2)$ to $(x_1,t_1)$ along the forward characteristic line $\overrightarrow{\mathcal{L}}_{x_1}$, we have
\begin{eqnarray*}\begin{split}
		&s_{21}(x_1,t_1)\geq s_{21}(x_2, t_2)
		+\frac{1}{2kk_1}\fint_{x_2}^{x_1}\frac{m'}{m}(x)r_{21}\left(x,\overrightarrow{t}(x)\right)dx,\\
		&s_{22}(x_1,t_1)\leq s_{22}(x_2, t_2)
		+\frac{1}{2k_2}\fint_{x_2}^{x_1}\frac{m'}{m}(x)r_{22}\left(x,\overrightarrow{t}(x)\right)dx.
	\end{split}\end{eqnarray*}
Recalling \eqref{m}, one can yield
$$
\left|\frac{[m(\overrightarrow{x}_1(0))]}{[m(x_1)]}\right|\leq \frac{k_{mr}}{k_{ml}}, \quad\quad
\left|\frac{[m(x)]}{[m(x_1)]}\right| \leq \frac{k_{mr}}{k_{ml}}.
$$
Considering these two facts,
substitute \eqref{defi-s2r2} into the above two inequalities, we obtain
\begin{equation}\label{s1-rightarrow<}
|s(x_1,t_1)|\leq k_4|s(x_2, t_2)|
+k_5\fint_{x_2}^{x_1}\left|\frac{m'}{m}(x)\right|
\left|r\left(x,\overrightarrow{t}(x)\right)\right|dx,
\end{equation}
where
$k_4=\max\Big\{\left(k_{mr}k_{ml}^{-1}\right)^{\frac{1}{2kk_1}},\left(k_{mr}k_{ml}^{-1}\right)^{\frac{1}{2k_2}}\Big\}$
and
$
k_5=\max\Big\{(2kk_1)^{-1}\big(k_{mr}$
$k_{ml}^{-1}\big)^{\frac{1}{2kk_1}},
(2k_2)^{-1}\left(k_{mr}k_{ml}^{-1}\right)^{\frac{1}{2k_2}}\Big\}
$.

From the above analyses, replacing the integration variable $x$ by $x_\sigma$ in \eqref{s1-rightarrow} and \eqref{s1-rightarrow<} shows:
\begin{eqnarray}\label{s1-rihghtarrow-new}\begin{split}
|s(x_1,t_1)|\leq& k_4 |s(\overrightarrow{x}_1(0),0)|\\
&+(k_3k_4+k_5)\fint_{\overrightarrow{x}_1(0)}^{x_1}\left|\frac{m'}{m}(x_\sigma)\right|
\left|r\left(x_\sigma,\overrightarrow{t}(x_\sigma)\right)\right|dx_\sigma.
\end{split}\end{eqnarray}
As showed in Figure \ref{fig-1}, there are four different cases about the position of 
$\overleftarrow{\mathcal{L}}_{x_\sigma}$, then we will have four different results about 
the relationship between $r(x_\sigma,t_\sigma)$ and $r(\overleftarrow{x}_\sigma(0),0)$
by the above mentioned method. When all these four relsuts are taken into account together, we can get
\begin{eqnarray}\label{r1-leftarrow}\begin{split}
|r(x_\sigma,t_\sigma)|&\leq \max\{1,k_6\} |r(\overleftarrow{x}_\sigma(0),0)|\\
&+\max\{k_6k_7+k_8, k_7+k_8\}
\bint_{\overleftarrow{x}_\sigma(0)}^{x_\sigma}\left|\frac{m'}{m}(x)\right||s(x,\overleftarrow{t}_\sigma(x))|dx,
\end{split}\end{eqnarray}
where
$k_6=\max\Big\{\left(k_{mr}k_{ml}^{-1}\right)^{\frac{1}{2k_1}},\left(k_{mr}k_{ml}^{-1}\right)^{\frac{1}{2kk_2}}\Big\}$,
$k_7=\max\left\{(2kk_1)^{-1},(2k_2)^{-1}\right\}$ and
$k_8=\max\Big\{(2k_1)^{-1}\left(k_{mr}k_{ml}^{-1}\right)^{\frac{1}{2k_1}},
(2kk_2)^{-1}\left(k_{mr}k_{ml}^{-1}\right)^{\frac{1}{2kk_2}}\Big\}$.

Finally, denoting 
$k_9=k_3k_4+k_5$, 
$k_{10} = \max\{1,k_6\}$ and
$k_{11} =\max\{k_6k_7+k_8, k_7+k_8\}$,
by substituting \eqref{r1-leftarrow} into \eqref{s1-rihghtarrow-new}, we can get
\begin{eqnarray}\label{s1r1}\begin{split}
|s(x_1,t_1)|\leq& k_4 |s(\overrightarrow{x}_1(0),0)|
+k_9k_{10}\fint_{\overrightarrow{x}_1(0)}^{x_1}\left|\frac{m'}{m}(x_\sigma)\right||r(\overleftarrow{x}_\sigma(0),0)|dx_\sigma\\
&+k_9k_{11}\fint_{\overrightarrow{x}_1(0)}^{x_1}\left|\frac{m'}{m}(x_\sigma)\right|
\bint_{\overleftarrow{x}_\sigma(0)}^{x_\sigma}\left|\frac{m'}{m}(x)\right||s(x,\overleftarrow{t}_\sigma(x))|dxdx_\sigma.
\end{split}\end{eqnarray}
The first two terms can be bounded by our initial bounds.
Similarly, we also have
\begin{eqnarray}\label{s1r1-new}\begin{split}
|s(x_\xi,t_\xi)|\leq& k_{12} |s(\overrightarrow{x}_\xi(0),0)|
+k_{10}k_{13}\fint_{\overrightarrow{x}_\xi(0)}^{x_\xi}\left|\frac{m'}{m}(x_\zeta)\right||r(\overleftarrow{x}_\zeta(0),0)|dx_\zeta\\
&+k_{11}k_{13}\fint_{\overrightarrow{x}_\xi(0)}^{x_\xi}\left|\frac{m'}{m}(x_\zeta)\right|
\bint_{\overleftarrow{x}_\zeta(0)}^{x_\zeta}\left|\frac{m'}{m}(x)\right||s(x,\overleftarrow{t}_\zeta(x))|dxdx_\zeta,
\end{split}\end{eqnarray}
where $k_{12} = \max\{1, k_4\}$ and $k_{13} = \max\{k_3k_4+k_5, k_3+k_5\}$.
Multiplying \eqref{s1r1-new} by $\frac{m'}{m}(x_\xi)$, and integrating the product from $x_1$ to $\overleftarrow{x}_1(0)$ along
$\overleftarrow{\mathcal{L}}_{x_1}$, we have
\begin{eqnarray}\label{s1r1-m}\begin{split}
&\bint_{x_1}^{\overleftarrow{x}_1(0)}\left|\frac{m'}{m}(x_\xi)\right||s(x_\xi,t_\xi)|dx_\xi\\
&\leq k_{12} \bint_{x_1}^{\overleftarrow{x}_1(0)}\left|\frac{m'}{m}(x_\xi)\right||s(\overrightarrow{x}_\xi(0),0)|dx_\xi\\
&\quad+k_{10}k_{13}\bint_{x_1}^{\overleftarrow{x}_1(0)}\left|\frac{m'}{m}(x_\xi)\right|
\fint_{\overrightarrow{x}_\xi(0)}^{x_\xi}\left|\frac{m'}{m}(x_\zeta)\right||r(\overleftarrow{x}_\zeta(0),0)|dx_\zeta dx_\xi\\
&\quad+k_{11}k_{13}\bint_{x_1}^{\overleftarrow{x}_1(0)}\left|\frac{m'}{m}(x_\xi)\right|
\fint_{\overrightarrow{x}_\xi(0)}^{x_\xi}\left|\frac{m'}{m}(x_\zeta)\right|
\bint_{x_\zeta}^{\overleftarrow{x}_\zeta(0)}\left|\frac{m'}{m}(x)\right|s(x,\overleftarrow{t}_\zeta(x))dxdx_\zeta dx_\xi.
\end{split}\end{eqnarray}
Set
$$
F(x_\eta):=\bint_{x_\eta}^{\overleftarrow{x}_\eta(0)}\left|\frac{m'}{m}(x)\right||s(x,\overleftarrow{t}_\eta(x))|dx.
$$
Since $\overleftarrow{x}_\zeta(0)=\overleftarrow{x}_\sigma(0)$ and $x_\zeta>x_\sigma$
in the same characteristic line, so we have
$F(x_\zeta)\leq F(x_\sigma)$.
Combining with \eqref{v} and \eqref{sr0}, we can rewrite \eqref{s1r1-m} as
$$
F(x_1)\leq k_{12}k_sV+k_{10}k_{13}k_rV^2+k_{11}k_{13}V\fint_{\overrightarrow{x}_1(0)}^{x_1}
\left|\frac{m'}{m}(x_\sigma)\right|F(x_\sigma)dx_\sigma.
$$
Now, using the Gronwall inequality, we can get
$$
F(x_1)\leq \left(k_{12}k_sV+k_{10}k_{13}k_rV^2\right)e^{k_{11}k_{13}V^2}.
$$
For $(x_\sigma,t_\sigma)\in \overrightarrow{\mathcal{L}}_{x_1}$, so $F(x_\sigma)$ is also
bounded by the same quantity. 
Thus, \eqref{s1r1} yields that
\begin{eqnarray*}\begin{split}
|s(x_1,t_1)| \leq k_6k_s+ k_9k_{10}k_rV+k_9k_{11}V
\left(k_{12}k_sV+k_{10}k_{13}k_rV^2\right)e^{k_{11}k_{13}V^2}.
\end{split}\end{eqnarray*}
Similarly, we can get
\begin{eqnarray*}\begin{split}
|r(x_1,t_1)| \leq k_6k_r+ k_9k_{10}k_sV+k_9k_{11}V
\left(k_{12}k_rV+k_{10}k_{13}k_sV^2\right)e^{k_{11}k_{13}V^2}.
\end{split}\end{eqnarray*}
																		
From the above analyses, we can show that the Riemann invariants $s$ and $r$ are bounded in finite piecewise monotic regions.			
\end{proof}

\begin{cor}\label{rmk-bdd}
Under the assumptions of Theorem \ref{main-thm}, we can get 
the $L^\infty$ bounds of $u$ and $h$. Also, we have the upper bound of $\rho$,
$c$ and $p$.
\end{cor}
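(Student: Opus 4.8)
The plan is to derive Corollary \ref{rmk-bdd} as a chain of elementary consequences of Lemma \ref{lem-sr-bound}. First I would observe that once $|s(x_1,t_1)|\leq n_s$ and $|r(x_1,t_1)|\leq n_r$ hold on the characteristic triangle, the definitions \eqref{rs} give $u=\frac{1}{2}(s+r)$ and $h=\frac{1}{2}(s-r)$, so immediately $|u|\leq\frac{1}{2}(n_s+n_r)$ and $0\leq h\leq\frac{1}{2}(n_s+n_r)=:n_h$. The content is entirely in passing from the bound on $h$ to a bound on $\tau^{-1}=\rho$, and then to bounds on $c$ and $p$.

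Next I would use the monotonicity of the map $\tau\mapsto h(\tau,S)$. Since $h=\int_\tau^{+\infty}\sqrt{-p_\xi(\xi,S)}\,d\xi$ and $-p_\tau>0$ by \textbf{(H1)}, $h$ is strictly decreasing in $\tau$, with $h\to 0$ as $\tau\to+\infty$ and, crucially, $h\to+\infty$ as $\tau\to 0$ by the divergence condition $\int_0^1\sqrt{-p_\tau}\,d\tau=+\infty$ in \textbf{(H2)}. Hence the upper bound $h\leq n_h$ forces $\tau$ to stay bounded away from $0$: there is a constant $\tau_{\min}=\tau_{\min}(n_h,S)>0$, uniform over the bounded range of $S$ (which is finite since $S$ has bounded total variation and is $C^1$), such that $\tau\geq\tau_{\min}$, i.e. $\rho\leq\rho_{\max}:=\tau_{\min}^{-1}$. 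The main obstacle here is the $S$-dependence: one must check that the lower bound $\tau_{\min}$ can be chosen uniformly in $x$, which follows from the continuity and compactness of the entropy range together with the second integral in \textbf{(H2)} being finite (so $h$ is finite) and the assumptions \textbf{(H4)} controlling how the pressure depends on $S$; alternatively one can simply note $h_0\le h\le n_h$ where $h_0$ denotes the value for the extreme entropy.

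Then the remaining bounds are automatic. Since $p=p(\tau,S)$ with $p_\tau<0$, on $\tau\in[\tau_{\min},+\infty)$ we have $p\leq p(\tau_{\min},S)\leq p_{\max}$, again uniformly in the compact $S$-range; and likewise $c=\sqrt{-p_\tau}$ is bounded above by $\sqrt{-p_\tau(\tau_{\min},S)}=:c_{\max}$ because $-p_\tau$ is itself decreasing in $\tau$ (this is $p_{\tau\tau}>0$ from \textbf{(H1)}). One could also invoke \eqref{inequ-pcsr}, which gives $p\leq\frac{1}{2}c(s-r)=ch$, combined with the explicit $h$-bound, to get $p$ bounded once $c$ is; but the cleanest route is the direct monotonicity argument above. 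I would also remark that Lemma \ref{lem-sr-bound} is local to any characteristic triangle on which the solution stays $C^1$, so these $L^\infty$ bounds are a priori bounds valid as long as the classical solution exists — which is exactly what is needed to feed into the Riccati analysis of Section 4.
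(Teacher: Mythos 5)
Your proof is essentially the same as the paper's: both pass from the $s,r$ bounds of Lemma \ref{lem-sr-bound} to $u=\tfrac12(s+r)$, $h=\tfrac12(s-r)$, then use the divergence of $\int_0^1\sqrt{-p_\tau}\,d\tau$ from \textbf{(H2)} to extract $\tau_{\min}$, and the monotonicity $p_{\tau\tau}>0$ to bound $c$. The only cosmetic difference is that the paper bounds $p$ via \eqref{p-leq-ch} ($p\le ch\le c_{\max}n_h$) whereas you use $p_\tau<0$ directly; both work, and your remark about uniformity of $\tau_{\min}$ over the (bounded) entropy range is a legitimate point of care that the paper leaves implicit.
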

\begin{proof}
First, \eqref{rs} gives the $L^\infty$ bounds of $u$ and $h$. 
Due to
$$
\int_{\tau}^{1}c(\xi)d\xi = \int_{\tau}^{1}\sqrt{-p_\xi(\xi,S)}d\xi \leq h = \frac{1}{2}(s-r)
$$
and the assumption \eqref{relation-p2},
there exists positive constants
$\tau_{\min}$ and $c_{\max}$ depending only on the initial data such that
$$
\tau(x,t)\geq \tau_{\min}>0, \quad c(x,t)\leq c_{\max}.
$$
So we have the upper bound of $\rho$ and $p$ on account of the definition of $\tau$ and \eqref{p-leq-ch}.
\end{proof}

\section{Singularity formation}
First, we recall the characteristic decompositions. By the definition of $y$ and $q$, we have (\textit{c.f.} \cite{chen-young})
\begin{eqnarray}\label{equ-partial+-partial-q}
\partial_+ y = a_0 + a_1 y - a_2 y^2, \quad\quad
\partial_-q  = a_0 - a_1 q - a_2 q^2, 
\end{eqnarray}
where
\begin{eqnarray*}
	\begin{split}
		& a_0 = -c I_\mu + \frac{\sqrt{c}}{2} \left(\frac{p_\mu}{c}\right)_h p_\mu 
		-c \left(\frac{p_\mu}{c}\right)_h I - \frac{c_h}{2\sqrt{c}}I^2,\\
		& a_1 = -(2\sqrt{c} I)_h,\quad\quad
	 a_2=\frac{c_h}{2\sqrt{c}}>0.
	\end{split}
\end{eqnarray*}

\subsection{Estimate on the root of $a_0+a_1y-a_2y^2=0$}
The first major step is to prove the lower bound on the roots of  
\[
a_0+a_1y-a_2y^2=0,
\]
which is
$$
y_{root} = \frac{-a_1\pm\sqrt{a_1^2 + 4 a_0 a_2}}{-2 a_2}
=\frac{1}{2}\left[\frac{a_1}{a_2} \mp\sqrt{\left(\frac{a_1}{a_2}\right)^2
	+4\frac{a_0}{a_2}}\right].
$$
Here $(\frac{a_1}{a_2})^2+4\frac{a_0}{a_2}\geq0$.
It is noticeable that the lower bound of $y_{root}$ depends on the 
$L^\infty$ estimates of $s$ and $r$.
\begin{lemma}\label{lemma-lower-bound}
Under the assumptions of Theorem \ref{main-thm},
there exists a positive constant $N$ only depending on the initial data such that
\[
\left|\frac{1}{2}\left[\frac{a_1}{a_2} \mp\sqrt{\left(\frac{a_1}{a_2}\right)^2
	+4\frac{a_0}{a_2}}\right]\right|<N.
\]
\end{lemma}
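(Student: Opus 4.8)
The plan is to reduce the bound on $y_{root}$ to upper bounds on the two ratios $a_1/a_2$ and $a_0/a_2$. Every real root of $a_0+a_1y-a_2y^2=0$ satisfies $y_{root}^2=(a_1/a_2)\,y_{root}+a_0/a_2$, hence $|y_{root}|^2\le|a_1/a_2|\,|y_{root}|+|a_0/a_2|$ and therefore $|y_{root}|\le|a_1/a_2|+\sqrt{|a_0/a_2|}$. So it suffices to produce constants $M_1,M_0$, depending only on $k_{01},k_{02}$ and the entropy, with $|a_1/a_2|\le M_1$ and $|a_0/a_2|\le M_0$, and then take $N:=M_1+\sqrt{M_0}+1$. (Division by $a_2$ is legitimate: $a_2=c_h/(2\sqrt c)$ and $c_h>0$ by \textbf{(H1)} as noted below; the reality of the roots, $(a_1/a_2)^2+4a_0/a_2\ge0$, is among the hypotheses.)

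First I would record the identities converting $h$- and $\mu$-derivatives into $\tau$- and $S$-derivatives. From $h=\int_\tau^{+\infty}\sqrt{-p_\xi}\,d\xi$ one gets $\partial_\tau h=-c$ at fixed $x$, so $\partial_h=-c^{-1}\partial_\tau$ on functions of $(\tau,S(x))$; in particular $c_h=-c_\tau/c=p_{\tau\tau}/(2c^2)$, whence $a_2=p_{\tau\tau}/(4c^{5/2})>0$, and $(p_\mu)_h=-p_{\tau\mu}/c$, $(p_\mu/c)_h=-p_{\tau\mu}/c^2-p_\mu p_{\tau\tau}/(2c^4)$, $I_h=\tfrac{\sqrt c}{2}(p_\mu/c)_h$. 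Feeding these into $a_0$ and into $a_1=-(2\sqrt c\,I)_h=-2a_2I-2\sqrt c\,I_h$, and isolating the terms $-a_2I^2$ and $-2a_2I$, a direct computation gives
\[
\frac{a_1}{a_2}=-2I+\frac{4c^{3/2}p_{\tau\mu}}{p_{\tau\tau}}+\frac{2p_\mu}{\sqrt c},\qquad
\frac{a_0}{a_2}=-I^2-\frac{4c^{7/2}I_\mu}{p_{\tau\tau}}+\Big(\frac{2c\,p_{\tau\mu}}{p_{\tau\tau}}+\frac{p_\mu}{c}\Big)\big(2\sqrt c\,I-p_\mu\big).
\]
So the whole estimate reduces to bounding the scalars $|I|$, $|p_\mu|/\sqrt c$, $c^{3/2}|p_{\tau\mu}|/p_{\tau\tau}$ and $c^{7/2}|I_\mu|/p_{\tau\tau}$ (and products thereof).

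The first three are short. By Corollary \ref{rmk-bdd} we have $c\le c_{\max}$ and $|h|\le H$ for constants depending only on the data, and $p/c\le h\le H$ by \eqref{p-leq-ch}; by \textbf{(H4)}, $|p_\mu|\le C|m'/m|\,p$ and $|p_{\tau\mu}|\le C|m'/m|\,|p_\tau|=C|m'/m|\,c^2$ with $C$ built from $k_1,k_2,l_3,l_4$; by \textbf{(H3)}, $l_2c^{7/2}\le p_{\tau\tau}\le l_1c^{7/2}/p$. Hence $|p_\mu|/\sqrt c\le C|m'/m|\,p/\sqrt c=C|m'/m|\sqrt c\,(p/c)\le C\sqrt{c_{\max}}\,H\,\|m'/m\|_\infty$ and $c^{3/2}|p_{\tau\mu}|/p_{\tau\tau}\le(C/l_2)\|m'/m\|_\infty$; and the integrand $\tfrac{\sqrt c}{2}(p_\mu/c)_h=-p_{\tau\mu}/(2c^{3/2})-p_\mu p_{\tau\tau}/(4c^{7/2})$ defining $I$ is pointwise $\le C'|m'/m|$, the $c$-powers and the factor $p$ cancelling exactly (here the \textbf{(H3)} \emph{upper} bound $p_{\tau\tau}\le l_1c^{7/2}/p$ is what removes the $p$), so $|I|\le C'\|m'/m\|_\infty(H+|h_0|)$. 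Here $\|m'/m\|_\infty$ is finite by the $C^1$ structure of $S$; the total variation $V=\int|m'/m|$ and the number of monotone pieces enter only through Corollary \ref{rmk-bdd} and Lemma \ref{lem-sr-bound}.

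The remaining term $c^{7/2}|I_\mu|/p_{\tau\tau}$ — equivalently $|I_\mu|$, since $p_{\tau\tau}\ge l_2c^{7/2}$ — is the main obstacle and the place I expect the bookkeeping to be heaviest. One differentiates $I=\int_{h_0}^h\tfrac{\sqrt c}{2}(p_\mu/c)_h\,dh$ with respect to the $\mu$-variable (in the $(h,\mu)$-type coordinates of \cite{chen-young}; a Leibniz boundary term, if one appears, is controlled by the same mechanism), expands $\partial_\mu\big[\tfrac{\sqrt c}{2}(p_\mu/c)_h\big]$ using $c_\mu=-p_{\tau\mu}/(2c)$ and the commutation of $\partial_\mu$ with $\partial_h$, and bounds the resulting finite sum of monomials — in $c$, $p$, and the first and second $\mu$-derivatives of $p$ — term by term via \textbf{(H3)}-\textbf{(H4)}. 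The structural point, and the reason no lower bound on the density is needed, is that each monomial that appears is scale invariant in $c$ once $p_\tau$ is counted as $c^2$ and $p_{\tau\tau}$ as $c^{7/2}$, and is at most quadratic in $m'/m$; so the vacuum degeneracy $c\to0$ is harmless and each monomial is dominated by a constant built from the $l_i,k_i,k,A$, $c_{\max}$, $H$ and $\|m'/m\|_\infty$. Inserting the four estimates into the displayed formulas for $a_1/a_2$ and $a_0/a_2$ yields $M_1,M_0$ and hence $N$, which is exactly the constant appearing in the blow-up criterion \eqref{inequ-y0-q0}. The essential inputs are thus (i) the $L^\infty$ control of $s,r$ from Lemma \ref{lem-sr-bound}, which makes $h$, $p/c$ and $I$ bounded, and (ii) the matching of $c$- and $(m'/m)$-powers in \textbf{(H3)}-\textbf{(H4)}; the delicate part is the $\mu$-differentiation of $I$ and the monomial-by-monomial check for $I_\mu$.
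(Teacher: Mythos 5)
Your proposal is correct and follows essentially the same route as the paper: reduce the root bound to boundedness of $a_1/a_2$ and $a_0/a_2$, convert the $h$- and $\mu$-derivatives in $I$, $I_h$, $I_\mu$ to $\tau$- and $S$-derivatives, and then close the estimate by matching the $c$- and $(m'/m)$-powers in \textbf{(H3)}--\textbf{(H4)} together with the $L^\infty$ bounds on $h$, $p/c$, $c$ from Lemma~\ref{lem-sr-bound} and Corollary~\ref{rmk-bdd}. Your algebraic reduction via $|y_{\mathrm{root}}|\le|a_1/a_2|+\sqrt{|a_0/a_2|}$ is a harmless shortcut relative to the paper's explicit quadratic formula, and your explicit formulas for $a_1/a_2$ and $a_0/a_2$ differ from the paper's by inessential constant factors (the paper's expression \eqref{equ-pmu-c-1} appears to drop a factor coming from the $h$-differentiation of $p_\mu$), which has no bearing on the boundedness argument.
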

\begin{proof}
We only need to show the boundedness of ${a_1}{a_2}^{-1}$ and ${a_0}{a_2}^{-1}$. 
First, we give elaborative calculation on $\left(p_\mu c^{-1}\right)_h$.
Because 
$$
c_h =  c_\tau \tau_h = -c^{-1} c_\tau
= \frac{1}{2}(-p_\tau)^{-1} p_{\tau\tau},
$$
then
\begin{equation*}
\left(p_\mu c^{-1}\right)_h=c^{-1} p_{\mu h} - c^{-2} c_h p_\mu
= (-p_\tau)^{-\frac{1}{2}} p_{\mu h} - \frac{1}{2} (-p_\tau)^{-2} p_{\tau\tau} p_\mu.
\end{equation*}
Also we have
$$
c^2=-p_\tau=- p_h h_\tau=c p_h, 
$$
this yields $p_h = c$,
$$
p_{\mu h} = p_{h\mu} = c_\mu = -\frac{1}{2} (-p_\tau)^{-\frac{1}{2}} p_{\tau \mu},
$$
which implies,
\begin{equation}\label{equ-pmu-c-1}
\left(p_\mu c^{-1}\right)_h
= -\frac{1}{2} (-p_\tau)^{-1} p_{\tau \mu} - \frac{1}{2} (-p_\tau)^{-2} p_{\tau\tau} p_\mu.
\end{equation}
Then, from the definition $(\ref{equ-h})_2$ of $I$, we can get
\begin{equation}\label{equ-ih}
I_h = \frac{1}{2} \sqrt{c} \left(p_\mu c^{-1}\right)_h
= -\frac{1}{4} (-p_\tau)^{-\frac{3}{4}} p_{\tau\mu}
-\frac{1}{4} (-p_\tau)^{-\frac{7}{4}} p_{\tau\tau} p_\mu.
\end{equation}
Integration by parts yields that
\begin{eqnarray}\label{equ-i-ibp}
\begin{split}
I = \frac{1}{2} \int_{h_0}^h \sqrt{c}d\left(p_\mu c^{-1}\right)
& =\frac{1}{2} \left(c^{-\frac{1}{2}} p_\mu\right)\bigg|_{h_0}^h
-\frac{1}{4} \int_{h_0}^h c^{-\frac{3}{2}} c_h p_\mu dh\\
& = \frac{1}{2} \left[(-p_\tau)^{-\frac{1}{4}} p_\mu \right]\bigg|_{h_0}^h
-\frac{1}{8} \int_{h_0}^h (-p_\tau)^{-\frac{7}{4}} p_{\tau\tau} p_\mu dh.
\end{split}
\end{eqnarray}
Then, we have
\begin{eqnarray}\label{equ-i-mu-ibp}
\begin{split}
I_\mu =&  \frac{1}{2} \left[\frac{1}{4} (-p_\tau)^{-\frac{5}{4}} p_{\tau\mu}p_\mu
+ (-p_\mu)^{-\frac{1}{4}} p_{\mu\mu}\right]\bigg|_{h_0}^h\\
& - \frac{1}{8} \int_{h_0}^h \left[\frac{7}{4}(-p_\tau)^{-2} p_{\tau\mu} p_{\tau\tau} p_\mu + (-p_\tau)^{-\frac{7}{4}} p_{\tau\tau\mu}p_\mu 
+ (-p_\tau)^{-\frac{7}{4}} p_{\tau\tau}p_{\mu\mu}\right]dh.
\end{split}
\end{eqnarray}
Direct calculation shows that
\begin{eqnarray}\label{equ-a1/a2}
\begin{split}
\frac{a_1}{a_2} = -2 I - 4 c c_h^{-1} I_h
=& - 2 I - 8 (-p_\tau)^{\frac{3}{2}} p_{\tau\tau}^{-1} I_h\\
=& - 2 I 
- 2 (-p_\tau)^{\frac{3}{4}} p_{\tau\tau}^{-1} p_{\tau\mu}
+ 2(-p_\tau)^{-\frac{1}{4}} p_\mu.
\end{split}
\end{eqnarray}
and
\begin{eqnarray}\label{equ-a0/a2}
\begin{split}
\frac{a_0}{a_2} =& -2 c^{\frac{3}{2}} c_h^{-1} I_\mu 
+ c c_h^{-1}\left(p_\mu c^{-1}\right)_h p_\mu
- 2 c^{\frac{3}{2}} c_h^{-1} \left(p_\mu c^{-1}\right)_h I
-I^2\\
=& -2 c^{\frac{3}{2}} c_h^{-1}\left[I_\mu + \left(p_\mu c^{-1}\right)_h I\right]
+ c c_h^{-1}\left(p_\mu c^{-1}\right)_h p_\mu
-I^2\\
=& -4 (-p_\tau)^{\frac{7}{4}} p_{\tau\tau}^{-1} 
\left\{I_\mu -\frac{1}{2} \left[(-p_\tau)^{-1} p_{\tau\mu} + (-p_\tau)^{-2} p_{\tau\tau} p_\mu\right]I\right\}\\
& - (-p_\tau)^{\frac{1}{2}} p_{\tau\tau}^{-1} p_{\tau\mu} p_\mu
- (- p_\tau)^{-\frac{1}{2}} p_\mu^2-I^2\\
\end{split}
\end{eqnarray}
By using of  \eqref{assumption-p-ptautau-ptau}, \eqref{inequ-pmu},
\eqref{inequ-ptaumu-pmumu-inequ-ptaotaomu}, \eqref{p-leq-ch} and corollary
\ref{rmk-bdd}, we can get the boundedness of ${a_1}a_2^{-1}$ and 
$a_0a_2^{-1}$.
Therefore, we prove this lemma.
\end{proof}
On the basis of the above lemma, it is easy to get:
\begin{lemma}
Under the assumptions of Theorem \ref{main-thm}, we can prove that
\begin{equation}\label{ineq-yq}
y(x,t)\leq Y, \quad q(x,t)\leq Q,
\end{equation}
where
\begin{equation}\label{defi-YQ}
Y = \max\left\{N, \sup_xy(x,0)\right\},\quad\quad
Q=\max\left\{N, \sup_xq(x,0)\right\}.
\end{equation}
\end{lemma}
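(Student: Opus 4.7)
The plan is a classical comparison/barrier argument using the downward-opening quadratic structure of the right-hand side of \eqref{equ-partial+-partial-q}. Since $a_2>0$, the function $f(y):=a_0+a_1y-a_2y^2$ is a concave parabola, strictly negative outside the interval bounded by its two roots. By Lemma \ref{lemma-lower-bound} both roots lie in $(-N,N)$, so for every $y\ge N$ we have $f(y)\le 0$, with strict inequality once $y$ exceeds the larger root.

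First I would show $y(x,t)\le Y$. Fix any point $(x_1,t_1)$ in the region where the $C^1$ solution exists and consider the forward characteristic $\overrightarrow{\mathcal L}_{x_1}$ from $(x_1,t_1)$ back to $t=0$. On this curve, $y$ satisfies the ODE
\begin{equation*}
\frac{d}{dt}\,y\bigl(\overrightarrow x(t),t\bigr)=\partial_+y=f(y).
\end{equation*}
At $t=0$, by definition of $Y$ we have $y(\overrightarrow x_1(0),0)\le \sup_x y(x,0)\le Y$. Suppose toward a contradiction that $y(x_1,t_1)>Y$. Then by continuity there is a last time $t_*\in [0,t_1)$ on this characteristic at which $y=Y$, and $y>Y$ on $(t_*,t_1]$. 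Because $Y\ge N$ strictly exceeds the larger root of $f$, we have $f(Y)<0$, so $\partial_+y(\overrightarrow x(t_*),t_*)<0$; this contradicts $y$ leaving $Y$ upward immediately after $t_*$. Hence $y(x_1,t_1)\le Y$ everywhere.

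The estimate $q\le Q$ is established by the symmetric argument on backward characteristics, using $\partial_-q=a_0-a_1q-a_2q^2$; the quadratic in $q$ again opens downward (same $-a_2<0$ leading coefficient) and its two roots are the negatives of those for $y$ under $a_1\mapsto -a_1$, so Lemma \ref{lemma-lower-bound} gives the same uniform bound $N$ on their absolute values, and the barrier argument applies verbatim.

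The main technical point — actually already handled by the preceding lemma — is ensuring that the bound $N$ on the roots is uniform in $(x,t)$; it is here that the $L^\infty$ control of $s,r,\tau,c,p$ from Corollary \ref{rmk-bdd} and the structural hypotheses \textbf{(H3)}, \textbf{(H4)} enter. Given Lemma \ref{lemma-lower-bound}, the remaining work is only the forward/backward characteristic barrier comparison sketched above, which is routine. The one small care-point is to phrase the contradiction in terms of a \emph{last} time $y=Y$ before the hypothetical excursion, rather than a first time $y$ exceeds $Y$, so that the sign of $f(Y)$ alone (and not the need to evaluate $f$ above $Y$) suffices to close the argument.
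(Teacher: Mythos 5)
Your barrier argument along forward and backward characteristics is exactly the intended proof: since $a_2>0$ makes the right-hand side a downward-opening parabola, Lemma \ref{lemma-lower-bound} places both roots strictly inside $(-N,N)$, so $\partial_+y<0$ whenever $y\ge Y\ge N$ (and symmetrically for $q$), and the ``last time $y=Y$'' contradiction closes the estimate. This matches the paper, which leaves the step implicit after establishing the uniform root bound.
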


\subsection{Time-dependent lower bound on $a_2$}
To show the formation of singularity, the key step is to obtain
the lower bound of $a_2$. In fact, the function $a_2$ might vanish
as time tends to infinity, such as for the gas dynamic case (\textit{c.f.} \cite{CPZ,CPZ2,smoller}).

\begin{lemma}
Assume that the pressure satisfies the assumptions (\textbf{H1}) and (\textbf{H3}),
then
\begin{equation}\label{a2-infinity}
\int_0^\infty a_2 \left(\tau(x,t),t\right)dt = \infty.
\end{equation}
\end{lemma}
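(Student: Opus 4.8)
The plan is to bound $a_2 = \frac{c_h}{2\sqrt c}$ from below by a function that is not integrable in $t$ along the relevant characteristic (here we track $\tau(x,t)$ at fixed Lagrangian label $x$, equivalently along a vertical line in $(x,t)$; since both $h$ and $c$ are functions of $\tau$ and $S(x)$, with $S$ fixed this amounts to a one-variable computation in $\tau$). First I would rewrite $a_2$ purely in terms of $\tau$: from $c_h = \frac12(-p_\tau)^{-1}p_{\tau\tau}$ (computed in the proof of Lemma~\ref{lemma-lower-bound}) and $c = \sqrt{-p_\tau}$, we get
\[
a_2 = \frac{c_h}{2\sqrt c} = \frac{p_{\tau\tau}}{4(-p_\tau)^{5/4}}.
\]
Using the lower bound $p p_{\tau\tau}\geq l_2 p c^{7/2} = l_2 p(-p_\tau)^{7/4}$ from $(\ref{assumption-p-ptautau-ptau})$, we obtain $p_{\tau\tau}\geq l_2 (-p_\tau)^{7/4}$ as long as $p>0$ (which holds by \textbf{(H1)}), hence
\[
a_2 \;\geq\; \frac{l_2}{4}\,(-p_\tau)^{1/2} \;=\; \frac{l_2}{4}\,c.
\]
So it suffices to show $\int_0^\infty c(\tau(x,t),t)\,dt = \infty$.

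Next I would use the $L^\infty$ bounds already established. By Corollary~\ref{rmk-bdd} there is $\tau_{\min}>0$ with $\tau(x,t)\geq \tau_{\min}$ for all $x,t$, and correspondingly a uniform \emph{lower} bound $c\geq c_{\min}>0$: indeed $c = \sqrt{-p_\tau}$ is a decreasing function of $\tau$ (since $p_{\tau\tau}>0$ forces $-p_\tau$ decreasing, wait — $p_{\tau\tau}>0$ means $p_\tau$ increasing, so $-p_\tau$ decreasing, so $c$ decreasing in $\tau$), combined with the \emph{upper} bound on $\tau$ coming from... actually the subtle point is that we need $\tau$ bounded \emph{above} to bound $c$ below, and the excerpt only gives $\tau\geq\tau_{\min}$. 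Here I would instead argue via $h$: from $(\ref{inequ-pcsr})$ and the $L^\infty$ bound on $h=\frac12(s-r)$ (Lemma~\ref{lem-sr-bound}), together with the upper bounds on $c$ and $p$, one extracts a strictly positive lower bound on $c$ on the range of $\tau$ actually attained; alternatively, since $h$ is bounded above and $h = \int_\tau^{+\infty}\sqrt{-p_\xi}\,d\xi$ with the integrand eventually integrable by $(\ref{relation-p2})_2$, the boundedness of $h$ from, and the fact that $\tau\ge\tau_{\min}$, pins $\tau$ into a compact subinterval of $(0,\infty)$, on which $c$ is bounded below by a positive constant $c_{\min}$. Then
\[
\int_0^\infty a_2\bigl(\tau(x,t),t\bigr)\,dt \;\geq\; \frac{l_2}{4}\int_0^\infty c\bigl(\tau(x,t),t\bigr)\,dt \;\geq\; \frac{l_2 c_{\min}}{4}\int_0^\infty dt \;=\; \infty,
\]
which is the claim.

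The main obstacle is justifying the uniform positive lower bound on $c$ along the trajectory, i.e.\ ruling out $\tau(x,t)\to\infty$ (vacuum-like behavior in specific volume) as $t\to\infty$; this is exactly where the $L^\infty$ bound on the Riemann invariant $h$ (and hence on $\tau$ from above via the definition of $h$ together with \textbf{(H2)}) does the work, so I would make sure to invoke Lemma~\ref{lem-sr-bound} and Corollary~\ref{rmk-bdd} carefully rather than only the stated density upper bound. The remaining inequalities — rewriting $a_2$ in $\tau$, and applying $(\ref{assumption-p-ptautau-ptau})$ — are routine algebra once the setup is fixed. Note that only \textbf{(H1)} (for $p>0$ and monotonicity of $c$) and \textbf{(H3)} (for $p_{\tau\tau}\geq l_2(-p_\tau)^{7/4}$) are needed, consistent with the hypotheses of the lemma, since the $L^\infty$ machinery of Section~3 already rests on the full assumption set.
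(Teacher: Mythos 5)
Your proposal has a genuine gap at exactly the step you flagged as ``the main obstacle''. You correctly reduce to showing that $c$ has a positive lower bound along the trajectory, i.e.\ that $\tau$ is bounded \emph{above}, but the attempted justification does not work. The function $h=\int_\tau^{+\infty}\sqrt{-p_\xi}\,d\xi$ is strictly decreasing in $\tau$ and, by $(\textbf{H2})_2$, tends to $0$ as $\tau\to+\infty$; so an $L^\infty$ \emph{upper} bound on $h$ only yields the \emph{lower} bound $\tau\geq\tau_{\min}$ (this is precisely what Corollary~\ref{rmk-bdd} proves and nothing more). To pin $\tau$ from above you would need a strictly positive \emph{lower} bound on $h$, and there is none: $h\geq 0$ trivially, and the Riemann invariants give only $|h|\leq H$. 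The paper is in fact explicit that no lower bound on density (i.e.\ no upper bound on $\tau$) is established or needed, and that $a_2$ may well decay to $0$ as $t\to\infty$; your inequality $a_2\geq\frac{l_2}{4}c$ is correct, but a pointwise lower bound of the form $a_2\geq\textrm{const}>0$ is too strong a target.

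The paper's actual strategy is different in kind: instead of bounding $a_2$ below by a constant, it bounds $[a_2]^{-1}=4(-p_\tau)^{5/4}p_{\tau\tau}^{-1}$ \emph{above by a linear function of $t$}, which still forces $\int_0^\infty a_2\,dt=\infty$. Concretely, $(\textbf{H3})_3$ (the third-derivative condition $(5+A)p_{\tau\tau}^2-4p_\tau p_{\tau\tau\tau}\geq0$ --- note you used $(\textbf{H3})_1$, which is not the relevant part here) gives the pointwise derivative bound $\left[4(-p_\tau)^{5/4}p_{\tau\tau}^{-1}\right]_\tau\leq A(-p_\tau)^{1/4}$, and then one estimates $\int_{\tau_{\min}}^{\tau(x,t)}(-p_\xi)^{1/4}\,d\xi$ by observing that its time derivative along the characteristic equals $\sqrt{c}\,u_x=\frac12(y+q)\leq\frac12(Y+Q)$, so it grows at most linearly in $t$. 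Combining these two steps gives $[a_2]^{-1}\leq k_{14}t+k_{15}$, hence divergence of the integral without any control of $\tau$ from above. If you wanted to salvage your route you would need a time-dependent lower bound on $c$ (of the type $c\gtrsim 1/t$, as in \cite{CPZ} or \cite{geng chen lower bound}), which is a substantial extra estimate that this paper deliberately avoids.
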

\begin{proof}
We know that \eqref{a2-infinity} is true if we can prove
\begin{equation}\label{ineq-a2}
\left[a_2\left(\tau(x,t)\right)\right]^{-1} \leq k_{14} t + k_{15}.
\end{equation}
Direct calculation shows that
$$
a_2 \left(\tau(x,t)\right) = \frac{c_h}{2\sqrt{c}}
=\frac{c_\tau \tau_h}{2\sqrt{c}}
=-\frac{1}{c} \frac{c_\tau}{2\sqrt{c}}
=\frac{1}{4}\left(-p_\tau\right)^{-\frac{5}{4}} p_{\tau\tau}.
$$
Then 
\begin{equation}\label{equ-a2-tau-inverse}
\left[a_2\left(\tau(x,t)\right)\right]^{-1} 
 = 4\left(-p_\tau\right)^{\frac{5}{4}} (p_{\tau\tau})^{-1}.
\end{equation}
From $(\ref{p-tau-k-k-1-relation-A-ptau})_3$, we can get
\begin{equation}\label{ineq-Aptau-ptautau}
\left[4\left(-p_\tau\right)^{\frac{5}{4}} (p_{\tau\tau})^{-1}\right]_\tau
\leq A \left(-p_\tau\right)^{\frac{1}{4}}.
\end{equation}
And we also have
\begin{eqnarray*}
\begin{split}
\left\{\int_{\tau_{\min}}^{\tau(x,t)} \left[-p_\xi(\xi)\right]^{\frac{1}{4}}d\xi\right\}_t
&= \left[\int_{\tau_{\min}}^{\tau(x,t)} \sqrt{c(\xi)}d\xi\right]_t\\
&=\sqrt{c}\tau_t =\sqrt{c} u_x 
=\frac{\sqrt{c}}{2}(s_x+r_x)
=\frac{1}{2}(y+q)\\
&\leq \frac{1}{2}(Y+Q).
\end{split}
\end{eqnarray*}
Integrating the last inequality with respect to $t$ yields
\begin{equation}\label{inequ-k1112}
\int_{\tau_{\min}}^{\tau(x,t)} \left[-p_\xi(\xi)\right]^{\frac{1}{4}}d\xi
\leq \int_{\tau_{\min}}^{\tau(x,0)}\left[-p_\xi(\xi)\right]^{\frac{1}{4}}d\xi
+ \frac{1}{2}(Y+Q)t
\leq k_{16}t + k_{17}.
\end{equation}
Combining \eqref{equ-a2-tau-inverse}, \eqref{ineq-Aptau-ptautau} 
and \eqref{inequ-k1112}, we can complete the proof.
\end{proof}

\subsection{Singularity formation}
In this subsection, we will prove the main theorem.

\textbf{Proof of Theorem \ref{main-thm}:}
We just consider the $\inf\limits_{x\in\R}y(x,0)<-N$ case, the other case for $q$
is similar.

We can assume that $-N$ is a uniform lower bound for the roots of 
$a_0+a_1 y - (1-\eps)a_2 y^2=0$, according to lemma \ref{lemma-lower-bound}.
Then since $a_2>0$, we have
\begin{equation}\label{inequ-equation-eps}
a_0+a_1 y - (1-\eps)a_2 y^2\leq 0, \quad \mbox{for ~every} ~y\leq -N.
\end{equation}
According to the definition of infimum, there exist $0<\eps\ll1$ and $x_0\in\R$ such that
\begin{equation}\label{y-x0-0}
y(x_0, 0) <-(1+\eps)N.
\end{equation}
Now we consider the forward characteristic passing $(x_0,0)$, we have
$$
\partial_+ y\leq -\eps a_2 y^2,
$$
integrating the last inequality from $0$ to $t$ with respect to the time variable, we can get
$$
\frac{1}{y(\overrightarrow{x}(t), t)} \geq \frac{1}{y(x_0,0)} 
+ \eps \fint_0^t a_2(\overrightarrow{x}(t), t) dt,
$$
From \eqref{a2-infinity} and \eqref{y-x0-0}, we can show that $y$  blows up in finite time.

\section*{Acknowledgments}
This research was partially supported by NSFC grant \#11301293/A010801,
and the China Scholarship Council No. 201406210115 as an exchange graduate student at the Georgia Institute of Technology.

\end{document}